\documentclass[12pt,a4paper]{amsart}
\usepackage{amsmath}
\usepackage{amsfonts}
\usepackage{amssymb}
\usepackage{amsthm}
\usepackage[utf8]{inputenc}
\usepackage{array}
\usepackage{fancyhdr}
\usepackage{geometry}
\usepackage{multicol}
\usepackage{enumerate}
\usepackage{stackrel}
\usepackage{color}

\newcommand{\B}{\mathbf{B}}		

\newcommand{\dv}{\operatorname{div}}

\renewcommand{\le}{\leqslant}
\renewcommand{\ge}{\geqslant}


\newcommand{\cN}{{\ensuremath{\mathcal N}}}

\newcommand{\R}{{\ensuremath{\mathbb R}}}

\newcommand{\dist}{\operatorname{dist}}

\newcommand{\loc}{\mathrm{loc}}

\setlength\arraycolsep{2pt}

\def\barroman#1{\sbox0{#1}\dimen0=\dimexpr\wd0+1pt\relax
  \makebox[\dimen0]{\rlap{\vrule width\dimen0 height 0.06ex depth 0.06ex}%
    \rlap{\vrule width\dimen0 height\dimexpr\ht0+0.03ex\relax 
            depth\dimexpr-\ht0+0.09ex\relax}%
    \kern.5pt#1\kern.5pt}}


\def\XXint#1#2#3{{\setbox0=\hbox{$#1{#2#3}{\int}$}
     \vcenter{\hbox{$#2#3$}}\kern-.5\wd0}}

\newtheorem{tw}{Theorem}[section]
\newtheorem{lem}[tw]{Lemma}
\newtheorem{wn}[tw]{Corollary}

\newtheorem{uwaga}[tw]{Remark}

\theoremstyle{definition}
\newtheorem{df}{Definition}[section]

\binoppenalty=\maxdimen
\relpenalty=\maxdimen

\author[M. Miśkiewicz]{Michał Miśkiewicz}
\title[Fractional differentiability for the $p$-Laplace system]{Fractional differentiability for solutions of the~inhomogenous $p$-Laplace system}
\address{Institute of Mathematics, University of Warsaw,\newline Banacha 2, 02-097 Warszawa, Poland}
\email{m.miskiewicz@mimuw.edu.pl}
\thanks{The research has been supported by the NCN grant no. 2012/05/E/ST1/03232 (years 2013-2017).}

\subjclass[2010]{35B65, 35J92}
\keywords{$p$-Laplacian, degenerate elliptic systems, fractional order Nikol'ski{\u\i} spaces}


\begin{document}

\begin{abstract}
It is shown that if $p \ge 3$ and $u \in W^{1,p}(\Omega,\mathbb{R}^N)$ solves the inhomogenous $p$-Laplace system 
\[
\operatorname{div} (|\nabla u|^{p-2} \nabla u) = f, \qquad f \in W^{1,p'}(\Omega,\mathbb{R}^N),
\]
then locally the gradient $\nabla u$ lies in the fractional Nikol'ski{\u\i} space $\mathcal{N}^{\theta,2/\theta}$ with any $\theta \in [ \tfrac{2}{p}, \tfrac{2}{p-1} )$. To the author's knowledge, this result is new even in the case of $p$-harmonic functions, slightly improving known $\mathcal{N}^{2/p,p}$ estimates. The method used here is an extension of the one used by A. Cellina in the case $2 \le p < 3$ to show $W^{1,2}$ regularity. 
\end{abstract}

\maketitle

\section{Introduction}
\label{sec:intro}

Recall that for $p > 1$, a $p$-harmonic function is a minimizer of the Dirichlet $p$-energy functional $\tfrac 1p \int_\Omega |\nabla u|^p$ in the class $W^{1,p}(\Omega)$ with fixed Dirichlet boundary conditions. It is also a solution of the Euler-Lagrange equation $\dv ( |\nabla u|^{p-2} \nabla u ) = 0$. 
To the author's knowledge, some of the best known local regularity results for the gradient of a $p$-harmonic function $u \in W^{1,p}$ are: 
\begin{itemize}
\item
$\nabla u \in C^{0,\alpha}$ for $1 < p < \infty$ (Ural'tseva \cite{Ura} for $p \ge 2$, see also \cite{LadUra,Eva82,Lew,DiB,Uhl,Tol84}),

\item
$\nabla u \in W^{1,p}$ for $1 < p \le 2$ (see \cite{Lin}), 

\item
$\nabla u \in W^{1,2}$ for $2 \le p < 3$ (Cellina \cite{Cel}, Sciunzi \cite{Sci}), 

\item
$\nabla u \in \cN^{2/p,p}$ for $p \ge 2$ (Mingione \cite{Min}).
\end{itemize}
It is worth noting that most of them were obtained for more general second order operators, non-trivial source terms or in case of systems of equations. The Nikol'ski{\u\i} space $\cN^{\theta,q}$ mentioned in the last result is a variant of fractional Sobolev spaces (see Definition \ref{df:Nikolskij}) and it appears naturally in this context. The main result of this paper holds for solutions of the inhomogenous $p$-Laplace system, but to the author's knowledge it is new also in the case of $p$-harmonic functions. 

\begin{tw}
\label{th:p-harmonic-regularity}
Let $p \ge 3$ and assume that $u \in W^{1,p}(\Omega,\R^N)$ solves the system
\begin{equation}
\label{eq:p-Poisson}
\dv (|\nabla u|^{p-2} \nabla u^\alpha) = f^\alpha 
\qquad \text{in } \Omega \text{ for } \alpha = 1,\ldots,N, 
\end{equation}
where $f \in W^{1,p'}(\Omega,\R^N)$. Then $\nabla u \in \cN_\loc^{\theta,2/\theta}(\Omega,\R^N)$ for every $\theta \in [ \frac{2}{p}, \frac{2}{p-1} )$ with 
\[
\| \nabla u \|_{\cN^{\theta,2/\theta}(\Omega')} 
\le C \left(\| u \|_{W^{1,p}(\Omega)} + \| f \|^{\frac{1}{p-1}}_{W^{1,p'}(\Omega)} \right)
\qquad \text{for } \Omega' \Subset \Omega.
\]
Here and in the sequel, the constant $C$ may depend on the domains $\Omega',\Omega$, the dimensions $n,N$ and the parameters $p,\theta$, but not the functions involved. 
\end{tw}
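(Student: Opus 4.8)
The plan is to translate the Nikol'ski{\u\i} assertion into a family of finite-difference estimates, establish a Cellina-type weighted second-derivative bound, and then remove the degeneracy weight through a decomposition by the size of the gradient. Throughout write $\Delta_h g = g(\cdot + h) - g$ and fix $\Omega' \Subset \Omega'' \Subset \Omega$ with a cutoff $\eta \in C_c^\infty(\Omega'')$, $\eta \equiv 1$ on $\Omega'$. Since $\theta \cdot \tfrac{2}{\theta} = 2$, membership $\nabla u \in \cN^{\theta,2/\theta}(\Omega')$ is equivalent to the uniform bound $\int_{\Omega'} |\Delta_h \nabla u|^{2/\theta} \le C |h|^2$ for all small $h$. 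Hence, writing $q := \tfrac{2}{\theta} \in (p-1, p]$, it suffices to prove
\[
\int_{\Omega'} |\Delta_h \nabla u|^{q}\,\dd x \le C |h|^2 \qquad \text{for every } q \in (p-1, p].
\]

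First I would establish the basic weighted estimate. Differencing \eqref{eq:p-Poisson} gives $\dv\big(\Delta_h(|\nabla u|^{p-2}\nabla u^\alpha)\big) = \Delta_h f^\alpha$; testing with $\eta^2 \Delta_h u^\alpha$ and summing over $\alpha$, the monotonicity of $\xi \mapsto |\xi|^{p-2}\xi$ bounds the principal term below by $c\int \eta^2 G^{p-2}|\Delta_h \nabla u|^2$, where $G := |\nabla u(\cdot + h)| + |\nabla u|$. On the right-hand side, the source contributes $\int \Delta_h f \cdot \eta^2 \Delta_h u$, which by H\"older and $f \in W^{1,p'}$ is controlled using $\|\Delta_h f\|_{L^{p'}} \le |h|\,\|\nabla f\|_{L^{p'}}$ and $\|\Delta_h u\|_{L^p} \le |h|\,\|\nabla u\|_{L^p}$; the cutoff term is absorbed similarly. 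This yields the central estimate
\[
\int_{\Omega'} G^{p-2}\,|\Delta_h \nabla u|^2\,\dd x \le C |h|^2, \tag{A}
\]
equivalently $V := |\nabla u|^{(p-2)/2}\nabla u \in \cN^{1,2}_\loc$, the direct analogue of Cellina's $W^{1,2}$ bound. The endpoint $q = p$ (Mingione's $\cN^{2/p,p}$) is then immediate, since $|\Delta_h \nabla u|^{p-2} \le G^{p-2}$ gives $\int |\Delta_h \nabla u|^p \le \int G^{p-2}|\Delta_h \nabla u|^2 \le C|h|^2$.

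The whole difficulty lies in reaching $q < p$, where the weight $G^{p-2}$ in (A) is weak precisely on the set where $\nabla u$ is small. The tool I would use is a dyadic decomposition by gradient size: set $E_k = \{2^k \le G < 2^{k+1}\}$ and $a_k = \int_{E_k} G^{p-2}|\Delta_h \nabla u|^2$, so that $\sum_k a_k \le C|h|^2$. On the layers with $G$ above a threshold $\mu$ one uses $(2^k)^{q-p} \le \mu^{q-p}$ (here $q - p < 0$) to bound the contribution by $\mu^{q-p}\sum_k a_k$; on the layers with $G < \mu$ one uses the crude pointwise bound $|\Delta_h \nabla u| \le G \le \mu$. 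Optimizing $\mu$ as a power of $|h|$ is what must produce the sharp exponent $2$, and the geometric series in $k$ converges only down to $q = p-1$, which is exactly why the endpoint $\theta = \tfrac{2}{p-1}$ is excluded.

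The main obstacle is that the two crude layer bounds, used alone, recover only $\theta = \tfrac2p$: on the high-gradient layers the oscillation $|\Delta_h \nabla u|$ is in fact much smaller than $G$, so replacing $|\Delta_h \nabla u|^{q-2}$ by $G^{q-2}$ is lossy, while on the low-gradient layers one has no control on the measure of the near-degenerate set. The sharpness of the claimed range is visible already in the one-dimensional model $\nabla u \sim |x|^{1/(p-1)}$, for which a direct computation gives $\int |\Delta_h \nabla u|^{q} \approx |h|^{\,1 + q/(p-1)}$ — the extra $+1$ coming from spatial integration over the region $G \approx |h|^{1/(p-1)}$, which is exactly the regime $\theta < \tfrac{2}{p-1}$. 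This dominant contribution comes from the near-degenerate region, where (A) is least effective, so the resolution — the genuine extension of Cellina's method — is to complement (A) with a second, direct control of the oscillation $|\Delta_h \nabla u|$ itself, obtained by testing the differenced system against a nonlinear function of $\Delta_h u$. Feeding this second estimate into the high-gradient layers removes the loss, and the threshold optimization then delivers $\int_{\Omega'}|\Delta_h \nabla u|^q \le C|h|^2$ for every $q > p-1$, which is the assertion.
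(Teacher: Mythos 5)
Your estimate (A) and the immediate deduction of the endpoint case $q=p$ are correct and standard (this is the Bojarski--Iwaniec/Mingione argument, obtained by testing the differenced system with $\eta^2\Delta_h u$). But everything beyond $q=p$ --- which is the entire content of the theorem --- is made to rest on a ``second, direct control of the oscillation $|\Delta_h\nabla u|$, obtained by testing the differenced system against a nonlinear function of $\Delta_h u$'' that you never specify or derive. This is a genuine gap, not a routine omission: you do not say which nonlinear test function, what estimate it yields, why the resulting bilinear form remains coercive once the test function is no longer linear in $\Delta_h u$ (the monotonicity inequality behind (A) pairs $\Delta_h(|\nabla u|^{p-2}\nabla u)$ with $\Delta_h\nabla u$ exactly, and that structure is destroyed by a nonlinear substitution), or how the threshold optimization then closes. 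As written, your argument proves only $\nabla u\in\cN^{2/p,p}_{\loc}$, which you yourself acknowledge.

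For the record, the paper's mechanism is different in two respects. First, the improved estimate is not an oscillation bound for $\Delta_h\nabla u$: it is the statement that $|\nabla u|^{s-1}\nabla u\in W^{1,2}_{\loc}$ for every $s>\frac{p-1}{2}$ (Theorem \ref{th:nonlinear-reg}), obtained by \emph{differentiating} (not differencing) a regularized, uniformly elliptic system $\dv(\nabla L_\varepsilon(\nabla u_\varepsilon))=f_\varepsilon$ and testing the differentiated system with $\eta^2\, l_\varepsilon(\nabla u_\varepsilon)^{2-q}\nabla u_\varepsilon$, $q=p-2s+2\in[2,3)$ --- a nonlinear function of $\nabla u$, not of $\Delta_h u$. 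Coercivity hinges on the algebraic inequality $1+(p-q)-(p-2)(q-2)=(p-1)(3-q)>0$, which is exactly where the restriction $s>\frac{p-1}{2}$ (equivalently $\theta<\frac{2}{p-1}$) enters; the $\varepsilon$-regularization is indispensable because the test function involves a nonpositive power of the gradient and the computation uses genuine second derivatives. Second, once that $W^{1,2}$ bound is available, no dyadic decomposition or threshold optimization is needed: one writes $\nabla u=\beta(V)$ with $V=|\nabla u|^{s-1}\nabla u$ and $\beta(w)=|w|^{1/s-1}w$, which is $\frac1s$-H\"older continuous, and the short composition Lemma \ref{lem:fract-diff} converts $V\in W^{1,2}$ into $\nabla u\in\cN^{\theta,2/\theta}$ with $\theta=1/s$. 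To salvage your route you would still need to produce the missing second estimate, and the natural candidate is precisely the weighted bound $\int\eta^2 l(\nabla u)^{2s-p}|D^2u|^2\le C$ of the paper rather than anything extracted from the differenced system.
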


\begin{uwaga}
A well known example (discussed in Section \ref{sec:sharpness}) shows the endpoint estimate $\cN^{\frac{2}{p-1},p-1}$ to be sharp: there is a solution of \eqref{eq:p-Poisson} satisfying $\nabla u \in \cN^{\frac{2}{p-1},p-1}$, but $\nabla u \notin \cN^{\frac{2}{p-1},q}$ for $q > p-1$ and $\nabla u \notin \cN^{\theta,p-1}$ for $\theta > \frac{2}{p-1}$. 
\end{uwaga}

\begin{uwaga}
Regularity of the source term $f$ is only needed for the estimate \eqref{eq:f-estimate}. A closer look reveals that for fixed $\theta$ it is enough to assume $f \in L^{p'}$ and $\nabla f \in L^r$ with $r = \frac{p}{2p - \frac{2}{\theta} - 1}$. Note that $r \searrow 1$ when $\theta \nearrow \frac{2}{p-1}$, so the assumptions are actually weaker for $\theta$ close to optimal. 
\end{uwaga}

Fractional differentiability estimates come from the following elementary observation: if $\beta$ is $\theta$-H{\"o}lder continuous and $V \in W^{1,2}$, then the composition $\beta(V)$ lies in $\cN^{\theta,2/\theta}$ (see Lemma \ref{lem:fract-diff} for the precise statement). 

In this context, recall a well-known result due to Bojarski and Iwaniec \cite{BojIwa}: if $u \in W^{1,p}$ is $p$-harmonic, then 
\[ V := |\nabla u|^{\frac{p-2}{2}} \nabla u \in W_\loc^{1,2}. \] 
One can recover $\nabla u$ from $V$ as $\nabla u = \beta(V)$, where $\beta(w) = |w|^{\frac{2}{p}-1} w$ is $\frac{2}{p}$-H{\"o}lder continuous, thus obtaining $\nabla u \in \cN_\loc^{2/p,p}$ as a~corollary. This was shown for a~quite general class of systems by Mingione \cite{Min}. Note that both proofs \cite{BojIwa,Min} rely on testing the equation with the same test function. 

Our aim is therefore to obtain $W^{1,2}$ estimates for some nonlinear expressions of the gradient -- similar to $V$, only with smaller exponents. In this way we are able to improve $\cN^{2/p,p}$ regularity of the gradient to almost $\cN^{\frac{2}{p-1},p-1}$. 

\begin{tw}
\label{th:nonlinear-reg}
Let $p \ge 3$ and assume that $u \in W^{1,p}(\Omega,\R^N)$ solves the $p$-Laplace system \eqref{eq:p-Poisson} with $f \in W^{1,p'}(\Omega,\R^N)$. Then 
\[
|\nabla u|^{s-1} \nabla u \in W_\loc^{1,2}(\Omega,\R^N)
\]
for each $\frac{p-1}{2} < s \le \frac{p}{2}$. Moreover, 
\[
\| |\nabla u|^{s-1} \nabla u \|_{W^{1,2}(\Omega')} \le 
C \left(\| u \|^s_{W^{1,p}(\Omega)} + \| f \|^{\frac{s}{p-1}}_{W^{1,p'}(\Omega)} \right)
\qquad \text{for } \Omega' \Subset \Omega.
\]
\end{tw}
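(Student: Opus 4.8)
I will prove the statement by establishing a weighted second‑derivative estimate and converting it into a bound on the full $W^{1,2}$ norm of $W_s := |\nabla u|^{s-1}\nabla u$. Write $\sigma := s-1$, so that the hypothesis $\frac{p-1}{2} < s \le \frac p2$ reads $\frac{p-3}{2} < \sigma \le \frac{p-2}{2}$, i.e.\ $-1 < 2s-p \le 0$. The zeroth‑order part is immediate: since $2s \le p$ and $\Omega' \Subset \Omega$ has finite measure, $\|W_s\|_{L^2(\Omega')}^2 = \int_{\Omega'} |\nabla u|^{2s} \le C\,\|u\|_{W^{1,p}(\Omega)}^{2s}$ by H\"older. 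Using the elementary pointwise equivalence $|\nabla W_s|^2 \sim |\nabla u|^{2\sigma}\,|\nabla^2 u|^2$ (valid away from $\{\nabla u = 0\}$, with constants depending on $s,p$), everything reduces to proving the uniform bound
\[
\int_{\Omega'} |\nabla u|^{2s-2}\,|\nabla^2 u|^2
\le C\Big(\|u\|_{W^{1,p}(\Omega)}^{2s} + \|f\|_{W^{1,p'}(\Omega)}^{\frac{2s}{p-1}}\Big).
\]
The second‑order quantities are legitimate here because the Bojarski--Iwaniec result \cite{BojIwa} already gives $V=|\nabla u|^{\frac{p-2}{2}}\nabla u \in W^{1,2}_\loc$, equivalently $|\nabla u|^{\frac{p-2}{2}}|\nabla^2 u| \in L^2_\loc$; this licenses differentiating \eqref{eq:p-Poisson} in a direction $e_k$ and testing against weighted expressions (alternatively one runs the whole argument with difference quotients $D_h^k$ and passes to the limit at the end).

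The core of the argument is the following testing scheme, in the spirit of Cellina \cite{Cel}. Fix $\eta\in C_c^\infty(\Omega)$ with $\eta\equiv 1$ on $\Omega'$, a regularization parameter $\mu>0$, and a direction $k$. Differentiating \eqref{eq:p-Poisson} produces the linearized system with coefficients
\[
a_{ij}^{\alpha\beta} = |\nabla u|^{p-2}\delta_{ij}\delta^{\alpha\beta} + (p-2)|\nabla u|^{p-4}\,\partial_i u^\alpha\,\partial_j u^\beta,
\]
which is elliptic with eigenvalues between $|\nabla u|^{p-2}$ and $(p-1)|\nabla u|^{p-2}$. I would test the differentiated equation with $\psi^\alpha := \eta^2\,(\mu+|\nabla u|^2)^{\frac{2s-p}{2}}\,\partial_k u^\alpha$ and sum over $k$ and $\alpha$. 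Integration by parts yields the identity
\[
\int \eta^2\,(\mu+|\nabla u|^2)^{\frac{2s-p}{2}}\, a_{ij}^{\alpha\beta}\,\partial_{ik}u^\beta\,\partial_{jk}u^\alpha
+ \mathrm{(W)} + \mathrm{(C)}
= -\int \partial_k f^\alpha\,\eta^2\,(\mu+|\nabla u|^2)^{\frac{2s-p}{2}}\,\partial_k u^\alpha,
\]
where $\mathrm{(C)}$ collects the cutoff terms carrying $\nabla(\eta^2)$ and $\mathrm{(W)}$ is the weight term carrying $\partial_j\big((\mu+|\nabla u|^2)^{\frac{2s-p}{2}}\big)$. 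As $\mu\to0$ the coercive first integral dominates $\int \eta^2 |\nabla u|^{2s-2}|\nabla^2 u|^2$, which is the quantity I want.

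The two ``easy'' errors are controlled as follows. For the cutoff term $\mathrm{(C)}$, applying $|a_{ij}^{\alpha\beta}\partial_{ik}u^\beta| \le C|\nabla u|^{p-2}|\nabla^2 u|$, Young's inequality (absorbing a small multiple of the coercive term), and $|\nabla u|^{2s-2}|\nabla u|^2 = |\nabla u|^{2s}$, I obtain a bound by $C\int|\nabla\eta|^2|\nabla u|^{2s} \le C\|u\|_{W^{1,p}(\Omega)}^{2s}$, where $2s\le p$ is used. For the source term, H\"older with $\nabla f\in L^{p'}$ gives (in the limit $\mu\to0$) a bound by $C\,\|\nabla f\|_{L^{p'}(\Omega)}\,\big(\int_\Omega |\nabla u|^{(2s-p+1)p}\big)^{1/p} \le C\,\|\nabla f\|_{L^{p'}(\Omega)}\,\|u\|_{W^{1,p}(\Omega)}^{2s-p+1}$; here $2s-p+1 \le 1$ (from $s\le \frac p2$) keeps the power below $L^p$, while $2s-p+1 > 0$ (from $s> \frac{p-1}{2}$) keeps it nonnegative, so $|\nabla u|$ to a negative power never appears. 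A final application of Young's inequality with conjugate exponents $\tfrac{2s}{2s-p+1}$ and $\tfrac{2s}{p-1}$ turns this cross term into $\varepsilon\,\|u\|_{W^{1,p}(\Omega)}^{2s} + C\,\|f\|_{W^{1,p'}(\Omega)}^{\frac{2s}{p-1}}$, matching the asserted right‑hand side after taking square roots.

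The main obstacle is the weight term $\mathrm{(W)}$, which carries the factor $(2s-p)(\mu+|\nabla u|^2)^{\frac{2s-p}{2}-1}$ and couples $a_{ij}^{\alpha\beta}\partial_{ik}u^\beta\,\partial_k u^\alpha$ with $\partial_j|\nabla u|^2$; this is exactly the place where genuine second derivatives of $u$ reappear and where the restriction $s>\frac{p-1}{2}$ is forced in the homogeneous ($f=0$) case, just as in Cellina's computation. The plan is to exploit the sign $2s-p\le 0$ and the positive‑definiteness of $(a_{ij}^{\alpha\beta})$ to rewrite $\mathrm{(W)}$ as a nonpositive leading quantity plus a remainder that is a controlled multiple of the coercive term; the multiplicative constant is governed by $|2s-p|$ and by the ratio of largest to smallest eigenvalue $(p-1)$, and the crucial point is that the resulting coefficient stays strictly below $1$ precisely when $2s-p>-1$, i.e.\ $s>\frac{p-1}{2}$. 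This allows $\mathrm{(W)}$ to be absorbed, and the same threshold $2s-p>-1$ guarantees that the $\mu$‑regularized integrals remain uniformly bounded (against the $L^p$ bound on $\nabla u$ and the $W^{1,2}$ bound on $V$), so that monotone/dominated convergence delivers the limit $\mu\to0$ and completes the estimate.
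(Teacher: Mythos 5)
Your testing scheme is essentially the one the paper uses: your test function $\eta^2(\mu+|\nabla u|^2)^{\frac{2s-p}{2}}\partial_k u^\alpha$ coincides (up to the choice of regularization) with the paper's $\eta^2\, l_\varepsilon(\nabla u)^{2-q}\nabla u$ with $q=p-2s+2$, and your identification of the threshold $2s-p>-1$, of the cutoff estimate via $|\nabla u|^{2s}\in L^1$, and of the source estimate via $\int|\nabla u|^{(2s-p+1)p}$ all match the paper's computation. However, there are two genuine gaps. First, the justification for differentiating the equation and testing is not adequate. You regularize only the \emph{weight} (adding $\mu$ inside $(\mu+|\nabla u|^2)^{\frac{2s-p}{2}}$) and appeal to Bojarski--Iwaniec to claim that $V=|\nabla u|^{\frac{p-2}{2}}\nabla u\in W^{1,2}_\loc$ ``licenses'' the computation. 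But for $p\ge 3$ the solution $u$ is only $C^{1,\alpha}$, $\nabla^2 u$ need not exist as a measurable function on the degenerate set $\{\nabla u=0\}$, and the weight $|\nabla u|^{2s-2}$ you need is \emph{larger} near that set than the natural weight $|\nabla u|^{p-2}$ controlled by $V$ (since $2s-2\le p-2$), so the target quantity $\int|\nabla u|^{2s-2}|\nabla^2 u|^2$ is not a priori meaningful. Moreover the chain-rule term $\partial_j\big((\mu+|\nabla u|^2)^{\frac{2s-p}{2}}\big)$ itself involves $\nabla^2 u$. The paper's essential device is different: it regularizes the \emph{equation}, replacing $|w|$ by $l_\varepsilon(w)=(\varepsilon^2+|w|^2)^{1/2}$ in the energy, so that the approximating minimizers $u_\varepsilon$ are genuinely smooth (Tolksdorf plus bootstrap), the whole computation is classical, and one then passes to the limit $\varepsilon\to0$ using minimality, uniqueness of the minimizer, and weak compactness of $\alpha_\varepsilon^s(\nabla u_\varepsilon)$ in $W^{1,2}(\Omega')$. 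Your difference-quotient alternative could in principle substitute for this, but as written it is a one-line assertion covering precisely the hardest foundational step.

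Second, the weight term $\mathrm{(W)}$ --- which you correctly single out as the main obstacle --- is only announced as ``the plan,'' not proved. This is the heart of the matter. The paper carries it out explicitly: combining the leading term and the weight term gives the integrand
\[
\eta^2 l(\nabla u)^{p-q}\left(|D^2u|^2+(p-q)\Big|D^2u\cdot\tfrac{\nabla u}{l(\nabla u)}\Big|^2-(p-2)(q-2)\Big|\Big\langle D^2u\cdot\tfrac{\nabla u}{l(\nabla u)},\tfrac{\nabla u}{l(\nabla u)}\Big\rangle\Big|^2\right),
\]
and coercivity follows from Cauchy--Schwarz together with the algebraic identity $1+(p-q)-(p-2)(q-2)=(p-1)(3-q)>0$, which is exactly the condition $q<3$, i.e.\ $s>\frac{p-1}{2}$. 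Your heuristic (coefficient governed by $|2s-p|$ and the eigenvalue ratio $p-1$, staying below $1$ iff $2s-p>-1$) points at the right answer, but without the computation the proof is incomplete. Your final bookkeeping (Young's inequality with exponents $\frac{2s}{2s-p+1}$ and $\frac{2s}{p-1}$ to reach the stated form of the constant) is a legitimate and slightly more direct alternative to the paper's scaling argument.
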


\medskip

The proof follows roughly by differentiating the $p$-Laplace system \eqref{eq:p-Poisson} and testing the obtained system with the function $\eta^2 |\nabla u|^{2s-p} \nabla u$ ($\eta$ being a cut-off function). Since this process involves the second order derivatives of $u$, it cannot be carried out directly. 
The problem lies in the fact that for $p>2$ the $p$-Laplace system \eqref{eq:p-Poisson} is degenerate at points where $\nabla u = 0$. This difficulty is bypassed by approximating $u$ with solutions of some uniformly elliptic systems. 
For fixed $\varepsilon > 0$ we consider the following approximation of the Dirichlet $p$-energy functional: 
\[
F_\varepsilon(u) = \frac 1p \int_{\Omega} ( \varepsilon^2 + |\nabla u|^2 )^{p/2} + \int_\Omega \langle u,f_\varepsilon \rangle_{\R^N}, 
\]
where $f_\varepsilon$ is a smooth approximation of $f$. By standard theory, $F_\varepsilon$ has a unique smooth minimizer $u_\varepsilon \in u+W_0^{1,p}(\Omega,\R^N)$. 
Since the elliptic constant vanishes as $\varepsilon \to 0$, regularity of $u_\varepsilon$ might be lost in the limit, so our goal is to obtain estimates similar to those in Theorem \ref{th:nonlinear-reg} uniformly in $\varepsilon$ (this is done in Lemma~\ref{lem:unif-bounds}). 

\medskip

The method outlined above is an extention of the one employed by Cellina \cite{Cel} in the case $2 \le p < 3$. Indeed, the proof of Theorem \ref{th:nonlinear-reg} carries over to this case, leading to the following result. 

\begin{tw}
\label{th:nonlinear-reg-Cel}
Let $2 \le p < 3$ and assume that $u \in W^{1,p}(\Omega,\R^N)$ solves the $p$-Laplace system \eqref{eq:p-Poisson} with $f \in W^{1,p'}(\Omega,\R^N)$. Then 
\[
|\nabla u|^{s-1} \nabla u \in W_\loc^{1,2}(\Omega,\R^N)
\]
for each $1 \le s \le \frac{p}{2}$. Moreover, 
\[
\| |\nabla u|^{s-1} \nabla u \|_{W^{1,2}(\Omega')} \le C 
\left(\| u \|^s_{W^{1,p}(\Omega)} + \| f \|^{\frac{s}{p-1}}_{W^{1,p'}(\Omega)} \right)
\qquad \text{for } \Omega' \Subset \Omega.
\]
\end{tw}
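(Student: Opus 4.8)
The plan is to rerun the proof of Theorem~\ref{th:nonlinear-reg} unchanged and to locate the single point where the hypothesis $p \ge 3$ is used, checking that for $2 \le p < 3$ the argument closes on the wider range $1 \le s \le \frac{p}{2}$. As there, I would first pass to the regularized problem: let $u_\varepsilon$ be the smooth minimizer of $F_\varepsilon$, solving the uniformly elliptic system $\dv((\varepsilon^2 + |\nabla u_\varepsilon|^2)^{(p-2)/2} \nabla u_\varepsilon^\alpha) = f_\varepsilon^\alpha$. Since $u_\varepsilon$ is smooth, I may differentiate this system in each direction $x_k$ and test the resulting second-order system with $\eta^2 (\varepsilon^2 + |\nabla u_\varepsilon|^2)^{s - \frac{p}{2}} \partial_k u_\varepsilon^\alpha$ (summed over $k$ and $\alpha$, with $\eta$ a cut-off), exactly as in Lemma~\ref{lem:unif-bounds}. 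After integration by parts the left-hand side controls the coercive quantity $\int \eta^2 (\varepsilon^2 + |\nabla u_\varepsilon|^2)^{s-1} |D^2 u_\varepsilon|^2$, which dominates $\int \eta^2 |\nabla W_\varepsilon|^2$ for $W_\varepsilon := (\varepsilon^2 + |\nabla u_\varepsilon|^2)^{(s-1)/2} \nabla u_\varepsilon$, while the right-hand side collects the cut-off terms together with a source contribution involving $\nabla f_\varepsilon$.

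The admissible range of $s$ is governed by exactly two inequalities in this computation. The upper bound $s \le \frac{p}{2}$ is what allows H\"older's inequality to control the source term $\int \eta^2 |\nabla f_\varepsilon| (\varepsilon^2 + |\nabla u_\varepsilon|^2)^{(2s - p + 1)/2}$ by $\|\nabla f_\varepsilon\|_{L^{p'}}$ times a power of $\|\nabla u_\varepsilon\|_{L^p}$, using $f \in W^{1,p'}$; this is where the regularity of $f$ enters, and it is insensitive to whether $p$ is above or below $3$. The lower bound comes from the coercivity of the quadratic form in the Hessian $D^2 u_\varepsilon$ that is produced on the left-hand side by differentiating both the coefficient $(\varepsilon^2 + |\nabla u_\varepsilon|^2)^{(p-2)/2}$ and the weight $(\varepsilon^2 + |\nabla u_\varepsilon|^2)^{s - \frac{p}{2}}$. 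Testing this form against a purely radial Hessian, proportional to $\nabla u_\varepsilon \otimes \nabla u_\varepsilon$, the coefficient reduces (up to a positive weight) to $(p-1)(2s - p + 1)$, so positive definiteness is equivalent to $2s - p + 1 > 0$, that is $s > \frac{p-1}{2}$; all other directions of $D^2 u_\varepsilon$ are automatically controlled.

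This radial inequality $s > \frac{p-1}{2}$ is the only place the position of $p$ relative to $3$ is felt. For $p \ge 3$ one has $\frac{p-1}{2} \ge 1$, the lower bound is binding, and one recovers Theorem~\ref{th:nonlinear-reg}. For $2 \le p < 3$ one has $\frac{p-1}{2} < 1$, so every $s$ with $s \ge 1$ already satisfies $s > \frac{p-1}{2}$ strictly; combined with the source bound $s \le \frac{p}{2}$ this yields precisely the asserted range $1 \le s \le \frac{p}{2}$. In particular the endpoint $s = 1$ is admissible exactly because $p < 3$ makes $1 > \frac{p-1}{2}$, and there $W_\varepsilon \to \nabla u$ recovers Cellina's $W^{1,2}$ estimate. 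With the range settled, the remainder is verbatim: Lemma~\ref{lem:unif-bounds} gives the bound $\|W_\varepsilon\|_{W^{1,2}(\Omega')} \le C(\|u\|_{W^{1,p}(\Omega)}^s + \|f\|_{W^{1,p'}(\Omega)}^{s/(p-1)})$ uniformly in $\varepsilon$, and letting $\varepsilon \to 0$ -- using weak $W^{1,2}$ compactness of $W_\varepsilon$, the convergence $u_\varepsilon \to u$ in $W^{1,p}_\loc$ to identify the weak limit as $|\nabla u|^{s-1} \nabla u$, and weak lower semicontinuity of the norm -- gives $|\nabla u|^{s-1} \nabla u \in W^{1,2}_\loc(\Omega,\R^N)$ with the stated estimate.

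Since the core estimate closes for all $s > \frac{p-1}{2}$, I expect no genuine obstacle beyond bookkeeping: the one thing to verify is that the proof of Theorem~\ref{th:nonlinear-reg} nowhere used $p \ge 3$ except through this radial inequality -- for instance, that the coercivity constant $2s - p + 1$ stays strictly positive and bounded away from $0$ uniformly in $\varepsilon$ down to $s = 1$, which holds since there it equals $3 - p > 0$. As the degeneracy exponent $p - 2 \in [0,1)$ is milder here than in the regime $p \ge 3$, the cross terms entering the Hessian form are smaller and this positivity is if anything easier to maintain, so the extension requires no new ideas.
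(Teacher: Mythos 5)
Your proposal is correct and matches the paper's approach exactly: the paper itself proves Theorem~\ref{th:nonlinear-reg-Cel} only by remarking that the proof of Theorem~\ref{th:nonlinear-reg} carries over, and your verification that the sole $p$-dependent constraint is the coercivity condition $(p-1)(2s-p+1)>0$ (equivalently $q=p-2s+2<3$ in the paper's notation), which for $2\le p<3$ admits all $s\ge 1$, is precisely the check that remark presupposes. The identification of the upper bound $s\le p/2$ with integrability of the test function against $\nabla f$ and the $\varepsilon\to 0$ limiting argument likewise agree with Lemma~\ref{lem:unif-bounds} and the proof of Theorem~\ref{th:nonlinear-reg}.
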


We can take $s$ equal to $1$ in the above theorem, thus recovering the following result due to Cellina \cite{Cel}. In this case one does not need to use the fractional differentiability lemma (Lemma \ref{lem:fract-diff}). 

\begin{wn}[{\cite[Th.~1]{Cel}}]
\label{th:p-harmonic-regularity-Cel}
Let $2 \le p < 3$ and assume that $u \in W^{1,p}(\Omega,\R^N)$ solves the $p$-Laplace system \eqref{eq:p-Poisson} with $f \in W^{1,p'}(\Omega,\R^N)$. Then $\nabla u \in W_\loc^{1,2}(\Omega,\R^N)$ with 
\[
\| \nabla u \|_{W^{1,2}(\Omega')} \le C 
\left(\| u \|_{W^{1,p}(\Omega)} + \| f \|^{\frac{1}{p-1}}_{W^{1,p'}(\Omega)} \right)
\qquad \text{for } \Omega' \Subset \Omega.
\]
\end{wn}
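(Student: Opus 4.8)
The plan is simply to specialize Theorem \ref{th:nonlinear-reg-Cel} to the exponent $s = 1$. First I would check that this choice is admissible: since $2 \le p < 3$ forces $\tfrac{p}{2} \ge 1$, the value $s = 1$ indeed satisfies the constraint $1 \le s \le \tfrac{p}{2}$ required in that theorem. Hence Theorem \ref{th:nonlinear-reg-Cel} applies verbatim with $s = 1$, and all that remains is to read off its conclusion in this case.

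Next I would observe that the nonlinear expression appearing in Theorem \ref{th:nonlinear-reg-Cel} collapses to the plain gradient. For $s = 1$ the exponent $s - 1$ vanishes, so $|\nabla u|^{s-1} \nabla u = \nabla u$ (with the usual convention that this equals $0$ on the set where $\nabla u = 0$). Consequently the assertion $|\nabla u|^{s-1} \nabla u \in W^{1,2}_\loc(\Omega,\R^N)$ becomes precisely $\nabla u \in W^{1,2}_\loc(\Omega,\R^N)$, which is the desired membership.

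Finally I would transcribe the quantitative bound. With $s = 1$ the exponents in the estimate of Theorem \ref{th:nonlinear-reg-Cel} simplify to $\|u\|^{s}_{W^{1,p}} = \|u\|_{W^{1,p}}$ and $\|f\|^{s/(p-1)}_{W^{1,p'}} = \|f\|^{1/(p-1)}_{W^{1,p'}}$, reproducing exactly the stated inequality for $\Omega' \Subset \Omega$. I expect no genuine obstacle here, precisely because taking $s = 1$ eliminates the degenerate weight $|\nabla u|^{s-1}$ entirely; this is also why, as remarked in the text, the fractional differentiability lemma (Lemma \ref{lem:fract-diff}) — whose sole purpose is to convert $W^{1,2}$ control of a genuinely nonlinear expression into a Nikol'ski{\u\i} estimate for $\nabla u$ — is unnecessary in this borderline case, and one recovers Cellina's $W^{1,2}$ regularity directly.
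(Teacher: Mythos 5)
Your proposal is correct and is exactly the paper's argument: the corollary is obtained by taking $s=1$ in Theorem \ref{th:nonlinear-reg-Cel}, noting that $|\nabla u|^{s-1}\nabla u$ reduces to $\nabla u$ and that the stated estimate is the $s=1$ case of the one there. Nothing further is needed.
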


\medskip

For the sake of clarity, the following exposition is restricted to the case $N=1$, i.e. to the single $p$-Laplace equation. The general case follows exactly the same lines, but one has to keep track of the additional indices. 

\section{Fractional Sobolev spaces}

The main result is concerned with the estimates in Nikol'ski{\u\i} spaces \cite{Nik} (see also \cite{Ada}), which we now define. Below $\Omega \subseteq \R^n$ is an open domain and for each $\delta > 0$ we denote $\Omega_\delta = \{ x \in \Omega : \B(x,\delta) \subseteq \Omega \}$. 

\begin{df}
\label{df:Nikolskij}
Let $u \in L^q(\Omega)$, $\theta \in [0,1]$. The Nikol'ski{\u\i} seminorm $[u]_{\cN^{\theta,q}(\Omega)}$ is defined as the smallest constant $A$ such that 
\[
\left( \int_{\Omega_{|v|}} |u(x+v)-u(x)|^q \right)^{1/q} \le A |v|^\theta 
\]
holds for all vectors $v \in \R^n$ of length $|v| \le \delta$. The norm in $\cN^{\theta,q}(\Omega)$ is 
\[
\| u \|_{\cN^{\theta,q}(\Omega)} := \| u \|_{L^q(\Omega)} + [ u ]_{\cN^{\theta,q}(\Omega)}.
\]
Changing the value of $\delta > 0$ amounts to choosing an equivalent norm. 
\end{df}

In the context of this paper, only local results are available due to the use of cut-off functions. Therefore we may fix a~subdomain $\Omega' \Subset \Omega$, choose $\delta = \dist(\Omega',\partial \Omega)$ and look for estimates of the form 
\[
\left( \int_{\Omega'} |u(x+v)-u(x)|^q \right)^{1/q} \le A |v|^\theta 
\qquad \text{for vectors of length } |v| \le \delta.
\]

\medskip

Note that the seminorms $\cN^{1,q}$ and $W^{1,q}$ are equivalent for $q>1$ due to the difference quotient characterization of Sobolev spaces. 
This will be exploited in Lemma \ref{lem:fract-diff}. Other basic examples are $\cN^{0,q} = L^q$ and $\cN^{\theta,\infty} = C^{0,\theta}$. For the sake of comparison, let us also mention the embeddings 
\[
\cN^{\theta+\varepsilon,q}(\Omega) \hookrightarrow W^{\theta,q}(\Omega) \hookrightarrow \cN^{\theta,q}(\Omega)
\]
valid for any $\varepsilon > 0$ \cite[7.73]{Ada}. Here $W^{\theta,q}$ stands for the fractional Slobodecki{\u\i}-Sobolev space. 

\section{Regularity of nonlinear expressions}

Let us introduce a slight change of notation. The functions $u$, $f$ solving the degenerate equation \eqref{eq:p-Poisson} shall be henceforth referred to as $u_0$, $f_0$. For $\varepsilon > 0$ we introduce $u_\varepsilon$, $f_\varepsilon$ as smooth solutions to a non-degenerate approximate equation. 

\medskip

Since the claim is local, we can assume without loss of regularity that the domain $\Omega \subseteq \R^n$ is bounded. For fixed $\varepsilon > 0$ we consider the following approximations: 
\begin{alignat*}{3}
l_\varepsilon(w) & = ( \varepsilon^2 + |w|^2 )^{1/2} 
&& \qquad \text{for } w \in \R^n, \\
L_\varepsilon(w) & = \tfrac{1}{p} l_\varepsilon(w)^p 
&& \qquad \text{for } w \in \R^n, \\
F_\varepsilon(u) & = \int_{\Omega} L_\varepsilon(\nabla u) + u f_\varepsilon 
&& \qquad \text{for } u \in u_0 + W_0^{1,p}(\Omega). 
\end{alignat*}
We choose $f_\varepsilon$ to be some family of smooth functions such that $f_\varepsilon \to f_0$ in $W^{1,p'}(\Omega)$. 
Taking the limit $\varepsilon \to 0$, one recovers the $p$-energy $F_0$. 

We begin by noting some basic properties needed in the sequel. 

\begin{lem}
\label{lem:L-properties}
For $l_\varepsilon$, $L_\varepsilon$ defined as above, 
\begin{enumerate}[(a)]
\item $\max(\varepsilon, |w|) \le l_\varepsilon(w) \le \varepsilon+|w|$, 
\item $l_\varepsilon(w) \searrow |w|$ as $\varepsilon \searrow 0$, 
\item $L_\varepsilon$ is smooth and 
\[
\frac{\partial^2 L_\varepsilon}{\partial w_i \partial w_j} (w)
= l_\varepsilon(w)^{p-2} \delta_{ij} + (p-2)l_\varepsilon(w)^{p-4} w_i w_j, 
\]
hence it is uniformly elliptic: 
\[
( \varepsilon^2 + |w|^2 )^{\frac{p-2}{2}} |v|^2 
\le \frac{\partial^2 L_\varepsilon}{\partial w_i \partial w_j} (w) v_i v_j 
\le (p-1) ( \varepsilon^2 + |w|^2 )^{\frac{p-2}{2}} |v|^2 
\]
holds for any $v,w \in \R^n$. 
\end{enumerate}
\end{lem}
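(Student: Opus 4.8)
The plan is to verify the three items by direct computation, since all are elementary consequences of the definitions $l_\varepsilon(w) = (\varepsilon^2+|w|^2)^{1/2}$ and $L_\varepsilon(w) = \tfrac1p l_\varepsilon(w)^p$.

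For part (a) I would square everything to avoid the square root. The lower bound follows from $l_\varepsilon(w)^2 = \varepsilon^2 + |w|^2 \ge \max(\varepsilon^2,|w|^2)$, and the upper bound from $(\varepsilon+|w|)^2 = \varepsilon^2 + 2\varepsilon|w| + |w|^2 \ge \varepsilon^2+|w|^2 = l_\varepsilon(w)^2$, both using monotonicity of the square root on $[0,\infty)$. Part (b) is then immediate: for fixed $w$ the map $\varepsilon \mapsto \varepsilon^2+|w|^2$ is strictly increasing on $(0,\infty)$, so $l_\varepsilon(w)$ decreases as $\varepsilon \searrow 0$, and continuity of the square root gives the limit $|w|$.

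The only part requiring genuine work is (c). Smoothness is clear because $\varepsilon > 0$ forces $\varepsilon^2+|w|^2 > 0$, so $L_\varepsilon$ is a composition of smooth maps. I would then compute $\partial_{w_i} L_\varepsilon = l_\varepsilon(w)^{p-2} w_i$ by the chain rule and differentiate once more via the product rule: the term $\partial_{w_j} w_i = \delta_{ij}$ produces the identity part, while $\partial_{w_j} l_\varepsilon(w)^{p-2} = (p-2) l_\varepsilon(w)^{p-4} w_j$ produces the rank-one part, giving the stated Hessian formula. Contracting with a vector $v$ yields $\partial^2_{w_iw_j} L_\varepsilon\, v_i v_j = l_\varepsilon(w)^{p-2}|v|^2 + (p-2) l_\varepsilon(w)^{p-4}(w\cdot v)^2$.

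For the ellipticity bounds I would sandwich the cross term by Cauchy--Schwarz together with part (a): $0 \le (w\cdot v)^2 \le |w|^2|v|^2 \le l_\varepsilon(w)^2|v|^2$. Since $p \ge 2$ makes $p-2 \ge 0$, dropping the nonnegative cross term yields the lower bound $l_\varepsilon(w)^{p-2}|v|^2$, while bounding that same term from above by $(p-2)l_\varepsilon(w)^{p-2}|v|^2$ yields $(p-1)l_\varepsilon(w)^{p-2}|v|^2$; rewriting $l_\varepsilon(w)^{p-2} = (\varepsilon^2+|w|^2)^{(p-2)/2}$ finishes the proof. There is no real obstacle here; the only point to watch is the sign of $p-2$, which is exactly what forces the nonnegativity of the cross term and hence the ordering of the two bounds.
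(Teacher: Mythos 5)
Your proof is correct and is exactly the ``straightforward computation'' the paper omits: the paper only records the auxiliary formula $\partial l_\varepsilon/\partial w_i = l_\varepsilon(w)^{-1} w_i$, which is precisely the chain-rule step you carry out, and your handling of the ellipticity bounds (dropping or bounding the nonnegative rank-one term, using $|w| \le l_\varepsilon(w)$ and $p \ge 2$) is the intended argument. Nothing further is needed.
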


The straightforward computations behind Lemma \ref{lem:L-properties} are omitted; these and later computations can be simplified by noting that 
\begin{alignat*}{3}
\frac{\partial l_\varepsilon}{\partial w_i} (w) 
& = l(w)^{-1} w_i 
&& \quad \text{for } w \in \R^n, \\
\frac{\partial}{\partial x_i} \left( l_\varepsilon(\nabla u)^s \right) 
& = s \cdot l_\varepsilon(\nabla u)^{s-2} \langle \nabla u_{x_i}, \nabla u \rangle 
&& \quad \text{for } u \colon \Omega \to \R. 
\end{alignat*}
An useful remark here is that the outcome of all computations depends on $\varepsilon$ only via the function $l_\varepsilon$, allowing us to show estimates uniform in $\varepsilon$.  

\medskip

The regularity result in Theorem \ref{th:nonlinear-reg} shall be first shown for similar nonlinear expressions of the gradients of the approximate solutions. For fixed parameters $s,\varepsilon > 0$ let us introduce the smooth function 
\[
\alpha_\varepsilon^s \colon \R^n \to \R^n, \qquad
\alpha_\varepsilon^s(w) = l_\varepsilon(w)^{s-1} w 
\]
Notice that for $\varepsilon = 0$ we recover the familiar expression $\alpha_0^s(w) = |w|^{s-1}w$ together with its inverse $\alpha_0^{1/s}$. 

\begin{lem}
\label{lem:unif-bounds}
Fix a solution $u_0 \in W^{1,p}(\Omega)$ of the equation \eqref{eq:p-Poisson} with $f_0 \in W^{1,p'}(\Omega)$. For each $\varepsilon \in (0,1)$ the functional $F_\varepsilon$ has a~unique smooth minimizer $u_\varepsilon \in u_0 + W_0^{1,p}(\Omega)$. Moreover, 
\begin{enumerate}[(a)]
\item the functions $u_\varepsilon$ are uniformly bounded in $W^{1,p}(\Omega)$, 
\item for each $\frac{p-1}{2} < s \le \frac{p}{2}$, the functions $\alpha_\varepsilon^s(\nabla u_\varepsilon)$ are uniformly bounded in $W_\loc^{1,2}(\Omega)$ with respect to $\varepsilon$, i.e. 
\[
\| \alpha_\varepsilon^s(\nabla u_\varepsilon) \|_{W^{1,2}(\Omega')} \le C(\Omega,\Omega',n,p,s,\| u_0 \|_{W^{1,p}(\Omega)},\| f_0 \|_{W^{1,p'}(\Omega)}) 
\qquad \text{for } \Omega' \Subset \Omega.
\]
\end{enumerate}
\end{lem}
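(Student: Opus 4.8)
\emph{Existence and part (a).} For fixed $\varepsilon\in(0,1)$ the integrand $L_\varepsilon$ is smooth, strictly convex and, by Lemma~\ref{lem:L-properties}(c), uniformly elliptic with lower ellipticity constant $\ge\varepsilon^{p-2}>0$ (as $p\ge2$). Coercivity of $F_\varepsilon$ on $u_0+W_0^{1,p}(\Omega)$ --- using $l_\varepsilon(w)\ge|w|$ and absorbing $\int u\,f_\varepsilon$ by H\"older and Poincar\'e --- together with strict convexity provides a unique minimizer $u_\varepsilon$, which solves $\dv(l_\varepsilon(\nabla u_\varepsilon)^{p-2}\nabla u_\varepsilon)=f_\varepsilon$; being a uniformly elliptic equation with smooth data, it has a smooth solution by standard theory, which I use freely below. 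For (a) I compare $F_\varepsilon(u_\varepsilon)\le F_\varepsilon(u_0)$, bound $l_\varepsilon(\nabla u_0)\le 1+|\nabla u_0|$ on the right and $\int(u_\varepsilon-u_0)f_\varepsilon$ by H\"older and Poincar\'e on the left, obtaining a uniform bound on $\int_\Omega|\nabla u_\varepsilon|^p$ in terms of $\|u_0\|_{W^{1,p}}$ and $\|f_\varepsilon\|_{W^{1,p'}}$.

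\emph{Set-up for (b).} Write $u=u_\varepsilon$, $l=l_\varepsilon(\nabla u)$, $|D^2u|^2=\sum_{i,j}(\partial_i\partial_ju)^2$, and set $A^{ij}=l^{p-2}\delta_{ij}+(p-2)l^{p-4}\partial_iu\,\partial_ju$. Differentiating the Euler--Lagrange equation in $x_k$ gives $\partial_i(A^{ij}\partial_j\partial_ku)=\partial_kf_\varepsilon$. I test the $k$-th equation with $\varphi_k=\eta^2l^{2s-p}\partial_ku$ ($\eta$ a cutoff, $\eta\equiv1$ on $\Omega'$), sum over $k$ and integrate by parts. With the shorthands $b_i=\sum_j\partial_i\partial_ju\,\partial_ju$ and $R=\sum_{i,j}\partial_i\partial_ju\,\partial_iu\,\partial_ju$, a direct computation using Lemma~\ref{lem:L-properties}(c) turns the resulting identity into $G-T_2=\mathrm{CUT}+\mathrm{SRC}$, where
\[
G=\int\eta^2\big(l^{2s-2}|D^2u|^2+(p-2)l^{2s-4}|b|^2\big),\qquad T_2=(p-2s)\int\eta^2\big(l^{2s-4}|b|^2+(p-2)l^{2s-6}R^2\big),
\]
and $\mathrm{CUT}$, $\mathrm{SRC}$ collect the cutoff and source contributions. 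Note $T_2\ge0$ since $s\le p/2$. A short computation also gives $|\nabla(\alpha_\varepsilon^s(\nabla u))|^2\le l^{2s-2}|D^2u|^2+(s^2-1)l^{2s-4}|b|^2$, so controlling the integrand of $G-T_2$ controls $\int\eta^2|\nabla\alpha_\varepsilon^s(\nabla u)|^2$.

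\emph{Main obstacle.} The difficulty is that $T_2$ is of the same order as $G$ and cannot be dropped; the whole point is to show that $G-T_2$ is still coercive. The integrand of $G-T_2$ is
\[
l^{2s-2}|D^2u|^2+(2s-2)\,l^{2s-4}|b|^2-(p-2s)(p-2)\,l^{2s-6}R^2,
\]
and I claim it is bounded below by $c\,(l^{2s-2}|D^2u|^2+l^{2s-4}|b|^2)$ for some $c=c(p,s)>0$. Using the two elementary bounds $l^{2s-6}R^2\le l^{2s-4}|b|^2\le l^{2s-2}|D^2u|^2$ (from $|R|\le|\nabla u||b|\le l|b|$ and $|b|\le|\nabla u||D^2u|\le l|D^2u|$) and writing $X=l^{2s-2}|D^2u|^2\ge Y=l^{2s-4}|b|^2\ge l^{2s-6}R^2$, the expression is at least $X+g(s)\,Y$ with $g(s)=2s-2-(p-2s)(p-2)$. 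Since $g$ is increasing and $g(\tfrac{p-1}{2})=-1$, one has $g(s)>-1$ precisely when $s>\tfrac{p-1}{2}$, and then $X+g(s)Y\ge\tfrac{1+g(s)}{2}(X+Y)$ because $Y\le X$; this gives $c=\tfrac{1+g(s)}{2}>0$. This is exactly where the hypothesis $s>\tfrac{p-1}{2}$ enters, and it degenerates at the endpoint.

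\emph{Lower-order terms and conclusion.} It remains to dominate $\mathrm{CUT}+\mathrm{SRC}$ by $\delta\int\eta^2|\nabla\alpha_\varepsilon^s(\nabla u)|^2+C_\delta$ with $\delta$ small. For $\mathrm{CUT}$, the relation $|R|\le l|b|$ reduces it to $\int\eta|\nabla\eta|\,l^{2s-2}|b|$, and Young's inequality splits this into a part absorbed by the coercive term and a multiple of $\int_{\supp\eta}l^{2s}$, uniformly bounded since $2s\le p$ by part (a). For $\mathrm{SRC}$ I avoid integrating by parts and estimate $|\int\partial_kf_\varepsilon\,\varphi_k|\le\int|\nabla f_\varepsilon|\,\eta^2l^{2s-p+1}$ using $|\nabla u|\le l$; as $0<2s-p+1\le1$, H\"older with exponents $p',p$ bounds this by $\|\nabla f_\varepsilon\|_{L^{p'}}\|l^{2s-p+1}\|_{L^p}$, again uniformly bounded by part (a). Absorbing the small terms into the left-hand side and adding the trivial estimate $\int_{\Omega'}|\alpha_\varepsilon^s(\nabla u)|^2\le\int l^{2s}$ yields the uniform $W^{1,2}(\Omega')$ bound of (b); the passage $\varepsilon\to0$ is carried out in the proof of Theorem~\ref{th:nonlinear-reg}.
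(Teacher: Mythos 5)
Your proposal is correct and follows essentially the same route as the paper: differentiating the Euler--Lagrange system, testing with $\eta^2 l^{2s-p}\nabla u$, and using $|R|\le l|b|\le l^2|D^2u|$ together with the algebraic identity that makes the coefficient $2s-1-(p-2)(p-2s)=(p-1)(2s-p+1)$ positive exactly for $s>\tfrac{p-1}{2}$; your integrand for $G-T_2$ agrees term by term with the paper's expression \barroman{I}, and the cutoff and source terms are handled identically. One small slip: the inequality $X+g(s)Y\ge\tfrac{1+g(s)}{2}(X+Y)$ fails when $g(s)>1$ (which happens for $s$ near $\tfrac p2$ when $p>3$; take $Y=0$), but since $X+g(s)Y\ge\min(1,1+g(s))\,X\ge\tfrac12\min(1,1+g(s))(X+Y)$ for $Y\le X$, the coercivity constant $c(p,s)>0$ survives and nothing else is affected.
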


\begin{proof}[Proof of part (a)]
The existence of unique minimizer $u_\varepsilon \in W^{1,p}(\Omega)$ is a standard result, and $C^{1,\alpha}$ regularity was shown by Tolksdorf \cite{Tol83} (also in the case of systems of equations). Since the resulting elliptic equation is non-degenerate, $u_\varepsilon$ is smooth by a~bootstrap argument (although only $C^2$ regularity is needed in the sequel). 

We turn our attention to the uniform $W^{1,p}$ estimates. First, $u_\varepsilon - u_0 \in W_0^{1,p}(\Omega)$, hence 
\begin{align*}
\| u_\varepsilon \|_{L^p(\Omega)} 
& \le \| u_\varepsilon - u_0 \|_{L^p(\Omega)} + \| u_0 \|_{L^p(\Omega)} \\
& \le C \| \nabla u_\varepsilon - \nabla u_0 \|_{L^p(\Omega)} + \| u_0 \|_{L^p(\Omega)} \\
& \le C \| \nabla u_\varepsilon \|_{L^p(\Omega)} + C \| u_0 \|_{W^{1,p}(\Omega)}
\end{align*}
by Poincar{\'e}'s inequality; thus we only need to bound $||\nabla u_\varepsilon||_{L^p(\Omega)}$. Using the minimality of $u_\varepsilon$ and the monotonicity from Lemma \ref{lem:L-properties}, we obtain the bound 
\[
\int_\Omega \tfrac 1p |\nabla u_\varepsilon|^p + u_\varepsilon f_\varepsilon \le F_\varepsilon(u_\varepsilon) \le F_\varepsilon(u_0) \le \int_\Omega \tfrac 1p (1+|\nabla u_0|^2)^{p/2} + u_0 f_\varepsilon, 
\]
which together with the previous one yields a uniform bound for $\| \nabla u_\varepsilon \|_{L^p(\Omega)}$. 
\end{proof}

Part (b) of Lemma \ref{lem:unif-bounds} is the key part of this paper; it will be proved in Section~\ref{ch:apriori}. Taking it for granted, we can pass to the limit and prove Theorem \ref{th:nonlinear-reg}. 

\begin{proof}[Proof of Theorem \ref{th:nonlinear-reg}]
By Lemma \ref{lem:unif-bounds}a we can choose a sequence $\varepsilon \searrow 0$ such that $u_\varepsilon$ converges weakly in $W^{1,p}(\Omega)$ to some $\bar{u}$, in particular $\bar{u} = u_0$ on $\partial \Omega$. It also shows that the linear parts of the functionals $F_0$, $F_\varepsilon$ converge: 
\[
\int_\Omega u_\varepsilon f_0, \ \int_\Omega u_\varepsilon f_\varepsilon \to \int_\Omega \bar{u} f_0.
\]
As for the nonlinear part, we argue again by minimality and monotonicity: 
\begin{align*}
F_0(\bar{u}) 
& \le \liminf_{\varepsilon \to 0} F_0(u_\varepsilon) \\
& \le \liminf_{\varepsilon \to 0} F_\varepsilon(u_\varepsilon) \\
& \le \liminf_{\varepsilon \to 0} F_\varepsilon(u_0) \\
& = F_0(u_0). 
\end{align*}
Recall that the solution $u_0$ of the $p$-Laplace system \eqref{eq:p-Poisson} is unique and easily seen to minimize the $p$-energy $F_0$. Hence $\bar{u}$ has to coincide with $u_0$ as another minimizer of $F_0$. 

\medskip

After fixing $\Omega' \Subset \Omega$, we use Lemma \ref{lem:unif-bounds}b in a similar way, obtaining $\alpha_\varepsilon^s(\nabla u_\varepsilon) \to \bar{\alpha}$ weakly in $W^{1,2}(\Omega')$ and a.e. We can assume that $\bar{\alpha} = \alpha_0^s(v)$ for some vector field $v$, as $\alpha_0^s$ is invertible. An elementary pointwise reasoning shows that the convergence $\alpha_\varepsilon^s(\nabla u_\varepsilon) \to \alpha_0^s(v)$ leads to $\nabla u_\varepsilon \to v$ a.e. Combining this with weak convergence $\nabla u_\varepsilon \to \nabla u_0$, we infer that $v = \nabla u_0$ and $\bar{\alpha} = \alpha_0^s(\nabla u_0)$, in consequence 
\[
\| \alpha_0^s(\nabla u_0) \|_{W^{1,2}(\Omega')} 
\le C(\Omega,\Omega',n,p,s,\| u_0 \|_{W^{1,p}(\Omega)},\| f_0 \|_{W^{1,p'}(\Omega)}).
\]

To show that the constant has the desired form, we note the scaling properties of the $p$-Laplace system \eqref{eq:p-Poisson}. For each $\lambda>0$, the functions $\lambda u_0$ and $\lambda^{p-1} f_0$ also solve \eqref{eq:p-Poisson}; let us choose $\lambda$ small enough so that their norms do not exceed $1$. Then by the above discussion $\| \alpha_0^s(\nabla (\lambda u_0)) \|_{W^{1,2}(\Omega')} \le C$, where $C$ is independent of the functions involved. Since $\alpha_0^s$ is $s$-homogenous, this yields $\| \alpha_0^s(\nabla u_0) \|_{W^{1,2}(\Omega')} \le C \lambda^{-s}$, which is equivalent to our claim. 
\end{proof}

\section{A priori estimates}
\label{ch:apriori}

Throughout this section, the value of $\varepsilon > 0$ is fixed and the subscript $\varepsilon$ is omitted in $u_\varepsilon,f_\varepsilon,l_\varepsilon,L_\varepsilon,F_\varepsilon,\alpha_\varepsilon^s$. 

\begin{proof}[Proof of Lemma \ref{lem:unif-bounds}b]
Fix the subdomain $\Omega' \Subset \Omega$ and a cut-off function $\eta \in C^\infty_c(\Omega)$ such that $\eta \equiv 1$ on $\Omega'$ and $\eta \ge 0$. Choose the parameter $\frac{p-1}{2} < s \le \frac{p}{2}$ and additionally denote $q = p-2s+2$, thus $2 \le q < 3$. 

Since $u$ is a smooth minimizer of $F$, it satisfies the Euler-Lagrange equation $\dv (\nabla L(\nabla u)) = f$ and also the differentiated system 
\[ \dv (D^2 L(\nabla u) \nabla u_{x_j}) = f_{x_j} \qquad \text{for } j=1,2,\ldots,n. \]
This system can be tested with the vector-valued function $\gamma = l(\nabla u)^{2-q} \nabla u$ multiplied by the cut-off function $\eta^2$, resulting in 
\begin{align*}
\int_\Omega \sum_{j=1}^n \eta^2 \langle D^2 L(\nabla u) \nabla u_{x_j}, \nabla \gamma^j \rangle
& = - \int_\Omega \sum_{j=1}^n \gamma^j \langle D^2 L(\nabla u) \nabla u_{x_j}, \nabla \eta^2 \rangle \\
& \phantom{=} - \int_\Omega \sum_{j=1}^n \eta^2 \gamma^j f_{x_j}. 
\end{align*}
Let us denote the integrands above by \barroman{I}, \barroman{II}, \barroman{III}. 

\medskip

The estimate for the left-hand side is crucial. A straightforward calculation based on Lemma \ref{lem:L-properties} leads to 
\begin{align*}
\left( D^2 L(\nabla u) \nabla u_{x_j} \right)^i & = 
l(\nabla u)^{p-2} u_{x_i x_j} + (p-2) l(\nabla u)^{p-4} \langle \nabla u_{x_j}, \nabla u \rangle u_{x_i}, \\
\frac{\partial}{\partial x_i} \gamma^j & = 
l(\nabla u)^{2-q} u_{x_i x_j} + (2-q) l(\nabla u)^{-q} \langle \nabla u_{x_i}, \nabla u \rangle u_{x_j},
\end{align*}
which gives us 
\begin{align*}
\barroman{I}
& = \eta^2 l(\nabla u)^{p-q} \left ( \left| D^2 u \right|^2 
+ (p-q) \left| D^2 u \cdot \frac{\nabla u}{l(\nabla u)} \right|^2 \right. \\
& \phantom{=} \left. - (p-2)(q-2) \left| \left\langle D^2 u \cdot \frac{\nabla u}{l(\nabla u)}, \frac{\nabla u}{l(\nabla u)} \right\rangle \right|^2 \right ) \\
& \ge \min \left( 1, (p-1)(3-q) \right) \cdot \eta^2 l(\nabla u)^{p-q} \left| D^2 u \right|^2 
\end{align*}
In the last line we used the inequality $|\nabla u| \le l(\nabla u)$, the Cauchy-Schwarz inequality 
\[ 
\left| \left\langle D^2 u \cdot v, v \right\rangle \right| 
\le 
\left| D^2 u \cdot v \right| 
\le 
|D^2 u| 
\qquad \text{for any vector } |v| \le 1 
\]
and our choice of $q$: 
\[
1 + (p-q) - (p-2)(q-2) = (p-1)(3-q) > 0. 
\]

\medskip

The right-hand side is estimated in the standard way using Young's inequality: 
\begin{align*}
\barroman{II}
& \le \sum_{j=1}^n |\gamma^j| \cdot |D^2 L(\nabla u) \nabla u_{x_j}| \cdot |\nabla \eta^2| \\
& \le C(\Omega,\Omega',n,p,q) \cdot \eta l(\nabla u)^{p-q+1} |D^2 u| \\
& \le \delta C \cdot \eta^2 l(\nabla u)^{p-q} |D^2 u|^2 + \frac{1}{\delta} C \cdot l(\nabla u)^{p-q+2}. 
\end{align*}
For small enough $\delta > 0$, the first term can be absorbed by the left-hand side and the second is bounded using H\"{o}lder's inequality 
\begin{equation}
\label{eq:V-L2}
\int_\Omega l(\nabla u)^{p-q+2} 
\le |\Omega|^{\frac{q-2}{p}} \left ( \int_\Omega l(\nabla u)^p \right )^{\frac{p-q+2}{p}}
\le C(\Omega,p,q,\| u_0 \|_{W^{1,p}(\Omega)}), 
\end{equation}
where the second inequality above was shown in the proof of Lemma \ref{lem:unif-bounds}a. The last term is similar: 
\begin{equation}
\label{eq:f-estimate}
\barroman{III} 
\le \left( \int_\Omega l(\nabla u)^{p(3-q)} \right)^{1/p} \| \nabla f \|_{L^{p'}(\Omega)}
\le C(\Omega,p,q,\| u_0 \|_{W^{1,p}(\Omega)},\| f_0 \|_{W^{1,p'}(\Omega)}). 
\end{equation}
Note that one could apply H\"{o}lder's inequality with exponents $(\frac{p}{3-q},\frac{p}{p+q-3})$ instead of $(p,p')$, thus using weaker estimates on $f_0$. 

\medskip

Recalling $\eta \equiv 1$ on $\Omega'$, we can summarize these estimates with 
\begin{equation}
\label{eq:V_W12}
\int_{\Omega'} l(\nabla u)^{p-q} |D^2 u| \le C(\Omega,\Omega',n,p,q,\| u_0 \|_{W^{1,p}(\Omega)},\| f_0 \|_{W^{1,p'}(\Omega)}), 
\end{equation}
where the constant may depend on everything except $\varepsilon$. 

The function $V := \alpha^s(\nabla u) = l(\nabla u)^{s-1} \nabla u$ is smooth as a composition of smooth functions. Since $|V| \le l(\nabla u)^{s}$, the $L^2(\Omega)$-norm of $V$ has been estimated in \eqref{eq:V-L2}. Similarly, $|\nabla V| \le C(n,s) l(\nabla u)^{s-1} |D^2 u|^2$, hence \eqref{eq:V_W12} gives a bound on $\| \nabla V \|_{L^2(\Omega')}$ and finishes the proof. 
\end{proof}

\section{Fractional differentiability}
\label{sec:fract-diff}

\begin{lem}[fractional differentiability lemma]
\label{lem:fract-diff}
Assume that $\beta \colon \R^k \to \R^k$ is H{\"o}lder continuous with exponent $\theta \in (0,1)$ and constant $M > 0$, i.e. 
\[ |\beta(w) - \beta(v)| \le M |w-v|^\theta \quad \text{for } w,v \in \R^k. \]
If $V \in W^{1,2}(\Omega,\R^k)$ and $\Omega' \Subset \Omega$, then $\beta(V) \in \cN^{\theta,2/\theta}(\Omega',\R^k)$ with 
\[ [ \beta(V) ]_{\cN^{\theta,2/\theta}(\Omega')} \le C(n) M [V]^\theta_{W^{1,2}(\Omega)}. \]
\end{lem}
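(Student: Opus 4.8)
The plan is to bound the Nikol'ski\u\i{} seminorm directly from the definition by controlling the $L^{2/\theta}$-norm of the finite difference $\beta(V)(x+v)-\beta(V)(x)$ for each small vector $v$. I would fix $v\in\R^n$ with $|v|\le\delta$ and, using the H\"older hypothesis on $\beta$, estimate pointwise
\[
|\beta(V)(x+v)-\beta(V)(x)| \le M\,|V(x+v)-V(x)|^\theta .
\]
Raising to the power $2/\theta$ and integrating over $\Omega'$ converts the problem into controlling $\int_{\Omega'} |V(x+v)-V(x)|^2$, i.e. the $L^2$-norm of a finite difference of $V$. This is precisely the quantity governed by the difference-quotient characterization of $W^{1,2}$: since $V\in W^{1,2}(\Omega)$ and $\Omega'\Subset\Omega$, one has
\[
\Big(\int_{\Omega'} |V(x+v)-V(x)|^2\,dx\Big)^{1/2} \le C(n)\,[V]_{W^{1,2}(\Omega)}\,|v|,
\]
valid for $|v|\le\delta$. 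The exponent matching is the point of choosing the integrability $2/\theta$: the H\"older exponent $\theta$ turns the $2/\theta$-power of the left side into the $2$-power of the difference of $V$, and the extra factor $|v|$ from $W^{1,2}$ becomes $|v|^\theta$ after taking the $\theta$-th root.

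Concretely, I would combine the two displays as follows. From the pointwise bound,
\[
\int_{\Omega'} |\beta(V)(x+v)-\beta(V)(x)|^{2/\theta}\,dx
\le M^{2/\theta}\int_{\Omega'} |V(x+v)-V(x)|^{2}\,dx,
\]
so taking the $\theta/2$ power and invoking the difference-quotient estimate gives
\[
\Big(\int_{\Omega'} |\beta(V)(x+v)-\beta(V)(x)|^{2/\theta}\Big)^{\theta/2}
\le M\Big(C(n)[V]_{W^{1,2}(\Omega)}|v|\Big)^{\theta}
= C(n)^{\theta} M\,[V]^{\theta}_{W^{1,2}(\Omega)}\,|v|^{\theta}.
\]
By Definition~\ref{df:Nikolskij} with $q=2/\theta$, the smallest admissible constant $A$ is therefore bounded by $C(n)^{\theta}M[V]^{\theta}_{W^{1,2}(\Omega)}$, which (absorbing the harmless $\theta$-power of the dimensional constant into $C(n)$) is exactly the claimed bound on $[\beta(V)]_{\cN^{\theta,2/\theta}(\Omega')}$.

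The only genuine ingredient is the difference-quotient inequality for $W^{1,2}$, which is standard: for $u\in W^{1,2}$ one writes $V(x+v)-V(x)=\int_0^1 \langle \nabla V(x+tv),v\rangle\,dt$ along segments contained in $\Omega$ (guaranteed by $\Omega'\Subset\Omega$ and $|v|\le\delta$), applies Jensen and Fubini, and controls $\int_{\Omega'}\int_0^1|\nabla V(x+tv)|^2\,dt\,dx$ by $|v|^2\int_\Omega|\nabla V|^2$. For a general $W^{1,2}$ function this is justified by approximating with smooth functions. I do not expect any real obstacle here; the main subtlety worth stating carefully is the requirement $\theta\in(0,1)$ so that $2/\theta>2\ge 1$ is a legitimate integrability exponent and the seminorm is well defined, together with the book-keeping that ensures $\beta(V)\in L^{2/\theta}(\Omega')$ in the first place (which follows from $V\in W^{1,2}\subseteq L^2$ plus the at-most-linear-at-infinity growth implied by H\"older continuity, or more simply by fixing a basepoint $v_0$ and writing $|\beta(V)|\le|\beta(v_0)|+M|V-v_0|^\theta$).
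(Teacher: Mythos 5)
Your argument is correct and is essentially identical to the paper's proof: both apply the H\"older bound pointwise, raise to the power $2/\theta$, integrate over $\Omega'$, and invoke the standard $L^2$ difference-quotient estimate for $W^{1,2}$ functions. The extra remarks you include (justifying the difference-quotient inequality and the membership $\beta(V)\in L^{2/\theta}$) are fine but not needed beyond what the paper states.
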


\begin{proof}
Choose $\delta = \dist(\Omega',\partial \Omega)$ and fix some vector $v \in \R^n$ of length $|v| \le \delta$. For any $x \in \Omega'$, 
\begin{align*}
|\beta(V(x+v)) - \beta(V(x))|^{2/\theta} 
& \le (M|V(x+v)-V(x)|^\theta)^{2/\theta} \\
& = M^{2/\theta} |V(x+v)-V(x)|^2. 
\end{align*}
Integrating the above over $\Omega'$ yields 
\begin{align*}
\int_{\Omega'} |\beta(V(x+v)) - \beta(V(x))|^{2/\theta} 
& \le M^{2/\theta} \int_{\Omega'} |V(x+v)-V(x)|^2 \\
& \le C(n) M^{2/\theta} [V]_{W^{1,2}(\Omega)}^2 |v|^2.
\end{align*}
\end{proof}

\begin{proof}[Proof of Theorem \ref{th:p-harmonic-regularity}]
Choose $\theta \in [\frac{2}{p},\frac{2}{p-1})$ and $s = 1/\theta$. Then by Theorem \ref{th:nonlinear-reg} $V = |\nabla u_0|^{s-1} \nabla u_0 \in W_\loc^{1,2}(\Omega)$. Introduce the function 
\[ 
\beta \colon \R^n \to \R^n, \qquad 
\beta(w) = |w|^{\theta-1} w, 
\] 
so that $u_0 = \beta(V)$. To see that it is H\"{o}lder continuous, consider its inverse -- the elementary inequality (see e.g. \cite[Ch.~10]{Lin})
\[
\left\langle |w|^{s-1} w - |v|^{s-1} v, w-v \right\rangle 
\ge \frac 12 \left( |w|^{s-1}+|v|^{s-1} \right) |w-v|^2
\]
implies that $||w|^{s-1} w - |v|^{s-1} v| \ge 2^{-s} |w-v|^s$ and in consequence $\beta$ is $\theta$-H\"{o}lder continuous with constant $2$. By Lemma \ref{lem:fract-diff}, this implies $\nabla u_0 \in \cN_\loc^{\theta,2/\theta}(\Omega)$ together with the desired estimates. 
\end{proof}

\section{Sharpness of the estimates}
\label{sec:sharpness}

Let $p \ge 3$ and $u(x_1,\ldots,x_n) = \frac{1}{p'} |x_1|^{p'}$. It is easily seen that $u$ solves the inhomogenous $p$-Laplace equation $\dv(|\nabla u|^{p-2} \nabla u) = 1$ in $\R^n$. For fixed $q \in [1,\infty]$ we can find the largest $\theta > 0$ for which $u \in \cN^{\theta,q}(\B(0,1))$, arriving at 
\[
\nabla u \in 
\begin{cases}
C^{0,\frac{1}{p-1}} & \text{ for } q = \infty, \\
\cN^{\theta,q}, \ \theta = \frac{1}{p-1}+\frac{1}{q} & \text{ for } \frac{p-1}{p-2} < q < \infty, \\
W^{1,q} & \text{ for } 1 \le q < \frac{p-1}{p-2}. 
\end{cases}
\]
As a special case, we note that $\nabla u \in \cN^{\frac{2}{p-1},p-1}$ for $p > 3$ (but not for $p=3$). Moreover, this is optimal in the sense that $\nabla u \notin \cN^{\frac{2}{p-1},q}$ for $q > p-1$ and $\nabla u \notin \cN^{\theta,p-1}$ for $\theta > \frac{2}{p-1}$. 

It is natural to ask whether the claim of Theorem \ref{th:p-harmonic-regularity} can be strengthened to cover the endpoint case $\theta = \frac{2}{p-1}$ for $p > 3$. However, in view of this example one cannot hope for more regularity. 

\bibliography{p-harmonic}
\bibliographystyle{acm}

\end{document}